\documentclass[a4paper,12pt]{article} 
\title{Quasi-compactness of N\'eron models, and an application to torsion points}

\usepackage{amsmath, amssymb, mathrsfs, amsthm, geometry, mathtools, graphicx}
\usepackage{hyperref}
\usepackage[all]{xy}

\usepackage{enumitem}

\usepackage{cleveref}
\let\oref\ref
\AtBeginDocument{\renewcommand{\ref}[1]{\cref{#1}}}

\newcommand{\on}[1]{\operatorname{#1}}
\newcommand{\bb}[1]{{\mathbb{#1}}}

\newcommand{\ca}[1]{{\mathcal{#1}}}



\newcommand{\abs}[1]{\lvert#1\rvert}



\newcommand{\ra}{\rightarrow}

\newcommand{\hra}{\hookrightarrow}
\newcommand{\sub}{\subseteq}
\newcommand{\tra}{\rightarrowtail}

\theoremstyle{definition}
\newtheorem{definition}{Definition}[section]
\newtheorem{conjecture}[definition]{Conjecture}

\theoremstyle{plain}

\newtheorem{lemma}[definition]{Lemma}
\newtheorem{theorem}[definition]{Theorem}
\newtheorem{corollary}[definition]{Corollary}

\theoremstyle{remark}


\renewcommand{\phi}{\varphi}

\author{David Holmes}
\date{\today}




\newcounter{nootje}
\setcounter{nootje}{1}


\newcommand{\beq}{\begin{equation}}
\newcommand{\eeq}{\end{equation}}
\newcommand{\beqs}{\begin{equation*}}
\newcommand{\eeqs}{\end{equation*}}

\begin{document}
\maketitle
\begin{abstract} 
We prove that N\'eron models of jacobians of generically-smooth nodal curves over bases of arbitrary dimension are quasi-compact (hence of finite type) whenever they exist. We give a simple application to the orders of torsion subgroups of jacobians over number fields. 
\end{abstract}



\section{Introduction}

If $S$ is a regular scheme, $U \sub S$ is dense open, and $A/U$ is an abelian scheme, then a N\'eron model for $A/S$ can be defined by exactly the same universal property as in the case where $S$ has dimension 1 (we do not impose a-priori that the N\'eron model should be of finite type). Replacing schemes by algebraic spaces for flexibility, we investigate this in detail in \cite{Holmes2014Neron-models-an}, giving necessary and sufficient conditions for the existence of N\'eron models in the case of jacobians of nodal curves. In this short note we prove that, in the setting of jacobians of nodal curves, a N\'eron model is quasi-compact (and hence of finite type) over $S$ whenever one exists. 

Note that N\'eron models of non-proper algebraic groups (such as $\bb{G}_m$) need not be quasi-compact. Moreover, in \cite[\S 10.1, 11]{Bosch1990Neron-models} an example (due to Oesterl\'e) is given of a N\'eron model over a dedekind scheme all of whose fibres are quasi-compact, but which is not itself quasi-compact. Thus in general the question of quasi-compactness of N\'eron models can be somewhat delicate. 

We give an application to controlling the orders of torsion points on abelian varieties. Recall that by \cite{Cadoret2013Note-on-torsion} the uniform boundedness conjecture for jacobians of curves is equivalent to the same conjecture for all abelian varieties. By considering the universal case, the uniform boundedness conjecture is equivalent to
\begin{conjecture}[Uniform boundedness conjecture]\label{conj:UBC_reformulation_easy}
Let $U$ be a scheme over $\bb{Q}$ and $C/U$ a smooth proper curve. Let $\sigma \in J(U)$ be a section of the jacobian. Let $d\ge 1$ be an integer. Then there exists an integer $B$ such that for every point $u \in U(\bar{\bb{Q}})$ with $[\kappa(u):\bb{Q}]\le d$, the order of the point $\sigma(u) \in J_u$ is either infinite or at most $B$. 
\end{conjecture}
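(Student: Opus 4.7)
The plan is to use this paper's quasi-compactness theorem to extend $\sigma$ across a suitable compactification of $U$ as a section of a N\'eron model that is now \emph{of finite type}, to stratify by reduction type, and to reduce the statement to uniform torsion bounds for abelian varieties of fixed dimension over number fields of bounded degree.

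First I would spread out to reduce to the case where $U$ is integral and of finite type over $\bb{Q}$: both $\sigma$ and any test $\bar{\bb{Q}}$-point factor through such a subscheme, and a uniform bound there will suffice. By de Jong's theorem on alterations with semistable reduction, there is a generically-\'etale alteration $U'\ra U$ of some finite degree $e$ together with a proper regular compactification $U'\hra S$ whose boundary is a normal crossings divisor, such that the pullback of $C$ extends to a family $\bar C/S$ of nodal curves. Proving the conjecture for $\sigma|_{U'}$ with bound parameter $ed$ implies it for $\sigma$ with $d$, so I may assume $U$ already sits inside such an $S$.

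Second, applying the criterion of \cite{Holmes2014Neron-models-an} (after possibly further toroidal blow-ups of $S$ along the boundary), I obtain the N\'eron model $N/S$ of the jacobian $J=\on{Pic}^0(\bar C|_U/U)$, and the main theorem of this paper asserts that $N/S$ is of finite type. By the N\'eron mapping property $\sigma$ extends uniquely to a section $\tilde\sigma\colon S\ra N$; the closed subschemes $Z_n \defeq \tilde\sigma^{-1}(N[n])\sub S$ exhaust the torsion locus of $\tilde\sigma$ as $n$ varies, and the conjecture reduces to showing that for $u\in S(\bar{\bb{Q}})$ with $[\kappa(u):\bb{Q}]\le d$ lying in $\bigcup_n Z_n$, the minimal such $n$ is uniformly bounded.

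Third, because $S$ and $N/S$ are both of finite type, I stratify $S$ into finitely many locally closed pieces along which the triple of invariants $(g,t,|\Phi|)$ --- abelian rank of $N^0$, toric rank of $N^0$, and order of the component group $\Phi=N/N^0$ --- is constant. On each stratum the order of a torsion point in $N_u(\kappa(u))$ is at most $|\Phi|$ times the order of its image in the abelian quotient of $N^0$ times the order of a torsion point of the toric part $\bb{G}_m^t$ over $\kappa(u)$; the toric contribution is bounded because $[\bb{Q}(\zeta_k):\bb{Q}]=\phi(k)\le d$ forces $k$ bounded. The remaining step is a uniform bound on torsion of abelian varieties of dimension $\le g$ over number fields of degree $\le d$, and this is the main obstacle: for $g=1$ it is Merel's theorem, but for $g>1$ it is itself the open uniform boundedness conjecture in higher dimension. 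The contribution of quasi-compactness is therefore to reduce the ostensibly parameter-dependent statement to a finite disjoint union of fixed-dimension instances, after which the arithmetic core of the conjecture (for genus $\ge 2$) must be supplied by whatever uniform bound is known in that dimension.
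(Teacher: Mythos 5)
The statement you are trying to prove is labelled a \emph{Conjecture} in the paper, and the paper gives no proof of it: it is a reformulation of the open uniform boundedness conjecture, and the paper only establishes special cases (notably \ref{thm:UBC_for_good_compactifiation} and \ref{final_cor}, under the strong hypothesis that the jacobian admits a N\'eron model over a \emph{proper} base). Your proposal does not close this gap, and you essentially admit as much in your last sentence: the final step requires a uniform bound on torsion of abelian varieties of dimension $\le g$ over number fields of degree $\le d$, which for $g>1$ \emph{is} the uniform boundedness conjecture (the paper itself recalls, via \cite{Cadoret2013Note-on-torsion}, that the jacobian case and the general abelian-variety case are equivalent). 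So the argument is circular: the quasi-compactness theorem reduces the conjecture to finitely many fixed-dimension instances of itself, which is a genuine reduction but not a proof.

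There is a second, independent gap earlier in your argument. In your second step you assert that ``after possibly further toroidal blow-ups of $S$ along the boundary'' a N\'eron model of $J$ exists over $S$. This is false in general: by \cite{Holmes2014Neron-models-an} existence of the N\'eron model is equivalent to the curve being \emph{aligned}, a condition on the labelled dual graphs that cannot in general be achieved by modifications of $S$ (it is a condition at codimension-$\ge 2$ points that blow-ups do not repair). The paper is explicit that this hypothesis is ``very strong'' and points out that for the universal curve a N\'eron model exists only in the compact-type case --- precisely the case where the conjecture is already easy. Note also that where the paper does prove its conditional results, the mechanism is different from yours: it spreads the finite-type group scheme out over $\bb{Z}[1/N]$ and bounds torsion orders by reduction modulo two auxiliary primes (\ref{lem:bound_points_over_finite_field}), rather than stratifying by reduction type and invoking Merel-type theorems on the abelian part.
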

We will show that this conjecture holds if there exists a compactification $U \hra S$ over which $C$ has a proper regular model with at-worst nodal singularities, and over which $J$ has a N\'eron model. The first two conditions are relatively mild, especially since it is actually enough to work up to alterations (cf. \cite{Jong1996Smoothness-semi}), but the assumption that a N\'eron model should exist is very strong. For the sake of those readers unfamiliar with the theory developed in \cite{Holmes2014Neron-models-an}, we mention
\begin{corollary}\label{cor_intro:smooth_boundary}
Let $S/\bb{Q}$ be a smooth proper variety, let $Z \tra S$ be a closed subvariety which is smooth over $\bb{Q}$. Let $C/S$ be a family of nodal curves which is smooth outside $Z$. Write $U = S \setminus Z$ and let $\sigma \in J(U)$ be any section of the family of jacobians. Then for every $d \ge 1$ there exists an integer $B$ such that for every point $u \in U(\bar{\bb{Q}})$ with $[\kappa(u):\bb{Q}]\le d$, the order of the point $\sigma(u) \in J_u$ is either infinite or at most $B$. 
\end{corollary}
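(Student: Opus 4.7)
The plan is to realise the setup of the corollary as a special instance of the general claim made in the introduction: Conjecture~\ref{conj:UBC_reformulation_easy} holds for $(U, C, \sigma)$ whenever $U$ admits a proper compactification $U \hra S$ over which $C$ extends to a proper regular nodal model and over which $J$ admits a N\'eron model. Since the compactification is given, only these two auxiliary conditions remain to be verified.

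For the regular nodal model: $C/S$ is already nodal and smooth outside $Z$. The total space $C$ can fail to be regular only at nodes over $Z$ whose local equation $xy = f$ has $f$ vanishing to order $\ge 2$ along $Z$. Since $Z$ is smooth in $S$, \'etale-locally $Z = V(t)$, and a problematic node has equation $xy = u t^k$ with $k \ge 2$ (an $A_{k-1}$-singularity); these are resolved by a standard toric blow-up which preserves the nodal structure (inserting chains of $\bb{P}^1$'s in the degenerate fibre). Alternatively, one may invoke de Jong's alteration theorem \cite{Jong1996Smoothness-semi}, the introduction having already noted that the application is robust under alterations.

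For the existence of a N\'eron model over $S$, I would invoke the criteria of \cite{Holmes2014Neron-models-an}, which characterise existence through a combinatorial \emph{alignment} condition on the labelled dual graphs of the fibres of the nodal model. Because $Z$ is smooth (hence locally cut out by a single regular parameter) and $S$ is regular, the labelling of the dual graph along $Z$ is governed by this one parameter, forcing alignment; the N\'eron model $N/S$ therefore exists. By the main theorem of the present paper $N$ is then quasi-compact over $S$, hence of finite type. The general claim now applies: extending $\sigma$ to a section $\bar\sigma \colon S \to N$ via the N\'eron mapping property (using regularity of $S$) and then invoking the finite-type N\'eron model supplies the bound $B = B(d)$. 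I expect the subtlest point in the corollary itself to be the alignment check, particularly if $Z$ has higher codimension or several smooth components meeting transversally; the rest is essentially formal once the two auxiliary hypotheses are in place.
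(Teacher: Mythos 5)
Your proposal follows the paper's own route: \ref{cor_intro:smooth_boundary} is deduced from \ref{final_cor} (proper base, regular nodal model, N\'eron model of finite type by \ref{thm:fin_type}) by checking that a smooth boundary forces alignment (this is example (4) of \ref{examples}), with regularity of the total space achieved by resolving the $A_k$-singularities at the nodes in a way that keeps the labels powers of the same parameter. The one point to tighten is your parenthetical that smoothness of $Z$ makes it locally principal: that holds only in codimension one, but where $Z$ has codimension at least two the labels, which cut out the (divisorial) image of the non-smooth locus inside $Z$, must be units, so $C/S$ is smooth there and alignment is vacuous; likewise distinct smooth components of a smooth $Z$ cannot meet, so the transversal-intersection worry you raise does not arise under the stated hypotheses.
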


\noindent More examples are given in \ref{examples}. 

This result is not very useful for proving that the full uniform boundedness conjecture holds for a given family of jacobians; a N\'eron model exists for the tautological curve over a family of curves $C/S$ if and only if $C/S$ is of compact type, in which case the uniform boundedness conjecture is in any case easy (cf. \ref{thm:UBC_for_good_compactifiation}).

Many thanks to Owen Biesel, Maarten Derickx, Bas Edixhoven, Wojciech Gajda, Ariyan Javanpeykar, Robin de Jong and Pierre Parent for helpful comments and discussions. The author is also very grateful to an anonymous referee at Crelle for pointing out a gaping hole in an earlier proof given in \cite{Holmes2014Neron-models-an} of the quasi-compactness of such N\'eron models. 


\section{The N\'eron model is of finite type}
\label{sec:NM_finite_type}
\newcommand{\Picn}[1]{\on{Pic}^{\abs{-}\le #1}}
\newcommand{\Picz}{\on{Pic}^{[0]}}

 Before giving the proof of our main theorem we briefly recall some definitions we need from \cite{Holmes2014Neron-models-an} and \cite{Holmes2014A-Neron-model-o}. In what follows, $S$ is a scheme. Details can be found in the above references. 

A \emph{nodal curve} over $S$ is a proper flat finitely presented morphism all of whose geometric fibres are reduced, connected, of dimension 1, and have at worst ordinary double point singularities. If $C/S$ is a nodal curve then we write $\Picz_{C/S}$ for the subspace of $\on{Pic}_{C/S}$ consisting of line bundles which have total degree zero on every fibre. If $s\in S$ is a point then a \emph{non-degenerate trait through $s$} is a morphism $f\colon T \ra S$ from the spectrum $T$ of a discrete valuation ring, sending the closed point of $T$ to $s$, and such that $f^*C$ is smooth over the generic point of $T$. 

We say a nodal curve $C/S$ is \emph{quasisplit} if the morphism $\on{Sing}(C/S) \ra S$ is an immersion Zariski-locally on the source (for example, a disjoint union of closed immersions), and if for every field-valued fibre $C_k$ of $C/S$, every irreducible component of $C_k$ is geometrically irreducible. 

Suppose we are given $C/S$ a quasisplit nodal curve and $s \in S$ a point. Then we write $\Gamma_s$ for the \emph{dual graph} of $C/S$ - this makes sense because $C/S$ is quasisplit and so all the singular points are rational points, and all the irreducible components are geometrically irreducible. Assume that $C/S$ is smooth over a schematically dense open of $S$. If we are also given a non-smooth point $c$ in the fibre over $s$, then there exists an element $\alpha \in \ca{O}_{S,s}$ and an isomorphism of completed \'etale local rings (after choosing compatible geometric points lying over $c$ and $s$)
\begin{equation*}
\widehat{\ca{O}^{et}}_{C,c} \stackrel{\sim}{\ra} \frac{\widehat{\ca{O}^{et}}_{S,s}[[x,y]]}{(xy-\alpha)}. 
\end{equation*}
This element $\alpha$ is not unique, but the ideal it generates in $\ca{O}_{S,s}$ is unique. We label the edge of the graph $\Gamma_s$ corresponding to $c$ with the ideal $\alpha\ca{O}_{S,s}$. In this way the edges of $\Gamma_s$ can be labelled by principal ideals of $\ca{O}_{S,s}$. 

If $\eta$ is another point of $S$ with $s \in \overline{\{\eta\}}$ then we get a \emph{specialisation map} 
\begin{equation*}
\on{sp}\colon\Gamma_s \ra \Gamma_\eta
\end{equation*}
on the dual graphs, which contracts exactly those edges in $\Gamma_s$ whose labels generate the unit ideal in $\ca{O}_{S, \eta}$. If an edge $e$ of $\Gamma_s$ has label $\ell$, then the label on the corresponding edge of $\Gamma_\eta$ is given by $\ell\ca{O}_{S,\eta}$.

\begin{theorem}\label{thm:fin_type}
Let $S$ be a regular excellent scheme, $U \hra S$ a dense open subscheme, and $C \ra S$ a nodal curve which is smooth over $U$ and with $C$ regular. Write $J$ for the jacobian of $C_U \ra U$. Assume that $J$ admits a N\'eron model $N$ over $S$. Then $N$ is of finite type over $S$. 
\end{theorem}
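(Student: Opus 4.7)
My plan is to work locally on $S$ and then decompose the N\'eron model $N$ into its fibrewise identity component and a component group, reducing the problem to a combinatorial bound on the latter using the existence hypothesis on $N$. First I would replace $S$ by an affine open (which is Noetherian, since $S$ is excellent), as being of finite type is Zariski-local on the base; and then, by passing to an \'etale cover, assume that $C/S$ is quasi-split, so that the labelled dual graph combinatorics recalled in the excerpt is available. The N\'eron model $N$ is smooth over $S$ by definition, hence already locally of finite type, so only quasi-compactness of $N \to S$ remains to be shown.

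The key structural tool is a short exact sequence
\[
0 \to N^0 \to N \to \Phi \to 0,
\]
where $N^0 \hra N$ is the fibrewise identity component (an open sub-group-algebraic-space, by smoothness of $N$) and $\Phi = N/N^0$ is \'etale over $S$. Using the construction from \cite{Holmes2014Neron-models-an} presenting $N$ as a quotient $\on{Pic}^{[0]}_{C/S}/\varepsilon$ by a vertical subgroup $\varepsilon$, one identifies $N^0$ with $\on{Pic}^0_{C/S}/(\on{Pic}^0_{C/S} \cap \varepsilon)$. Since $\on{Pic}^0_{C/S} \to S$ is of finite type by the standard theory of relative Picard schemes of flat proper families, and the intersection $\on{Pic}^0_{C/S} \cap \varepsilon$ is fibrewise \'etale (as $\varepsilon$ is generated by vertical line bundles supported in singular fibres), $N^0 \to S$ is of finite type.

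It therefore suffices to show that $\Phi \to S$ is quasi-compact, which, since $\Phi$ is \'etale with finite fibres, amounts to a uniform bound $\abs{\Phi_s} \le B$ on a chosen quasi-compact open of $S$. The fibre $\Phi_s$ is a finite group computed combinatorially from the labelled dual graph $\Gamma_s$ and the image of $\varepsilon_s$ in the multidegree group $\on{Pic}^{[0]}_s/\on{Pic}^0_s$. Noetherianity of $S$, together with the fact that the singular locus of $C/S$ has finitely many irreducible components each carrying a well-defined edge label, gives a finite stratification of $S$ by locally closed subsets on each of which the labelled-graph data relevant for computing $\Phi_s$ is essentially constant, so it is enough to bound $\abs{\Phi_s}$ on a single such stratum.

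This last bound is the main obstacle. The existence hypothesis on $N$ must be used essentially: by \cite{Holmes2014Neron-models-an} it imposes a strong "alignment" compatibility on labelled dual graphs under the specialisation maps $\on{sp}\colon\Gamma_s\ra\Gamma_\eta$, and the plan is to exploit this (possibly combined with a noetherian induction on the codimension of the stratum) to control the image of $\varepsilon_s$ inside the multidegree group as $s$ varies, and hence bound $\abs{\Phi_s}$ on each stratum. Translating the existence of $N$ into this concrete combinatorial control on the sub-lattice cutting out the component group is the delicate technical heart of the argument.
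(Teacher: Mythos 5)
Your reductions broadly track the paper's: localise, pass to the quasisplit case, use the presentation of $N$ as $\on{Pic}^{[0]}_{C/S}$ modulo the closure $\bar{e}$ of the unit section, and reduce to a uniform bound on the component groups $\Phi_s$ (the paper phrases this as surjectivity of $\on{Pic}^{|-|\le n}_{C_s/s} \to N_s$ for a constructible $n$, which is the same thing). But the proposal stops exactly where the proof has to start: you name the uniform bound on $\abs{\Phi_s}$ as ``the main obstacle'' and offer only a plan to extract it from alignment plus noetherian induction. That plan does not work as stated, for two reasons. First, alignment only constrains the \emph{ratios} of labels along circuits of $\Gamma_s$; it says nothing about the absolute size of the component group, which (already for an elliptic curve with $I_n$ reduction) can be arbitrarily large even though every one-edge circuit is trivially aligned. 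Second, your stratification step asserts that the data computing $\Phi_s$ is ``essentially constant'' on strata, but on a stratum where the underlying graph is constant the labels are ideals in varying local rings $\ca{O}_{S,s}$, and there is no a priori uniform control on them as $s$ varies; this is precisely the quantitative content that has to be proved, not assumed.

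The missing idea in the paper is geometric, not combinatorial: for each point $x$ one finds a dense open $V \subseteq \overline{\{x\}}$, an integer $m$, and for every $s \in V$ a \emph{non-degenerate trait} $f_s \colon T_s \to S$ through $s$ with $\abs{\on{ord}_{T_s} f_s^* \on{label}(e)} \le m$ for all edges $e$. This rests on \ref{lem:trait_lemma}: after shrinking, each label becomes a power of an element $r$ cutting out $\overline{\{x\}}$, and one builds the trait by slicing along a regular system of parameters completing $r$, so that $r$ pulls back to a uniformiser. Because the existence of $N$ forces $\bar{e}$ to be flat over $S$, its formation commutes with base change to $T_s$, so $\Phi_s$ can be computed over the trait, where Edixhoven's description gives a bound depending only on the dual graph and the thicknesses, both of which are now uniformly controlled on $V$. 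Your appeal to alignment is thus aimed at the wrong target: the existence hypothesis enters through the flatness/base-change property of $\bar{e}$, while the actual bound comes from the trait lemma. Without supplying these two ingredients the proposed argument has a genuine gap at its central step.
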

\begin{proof} The N\'eron model is smooth and hence locally of finite type; we need to prove that it is quasi-compact. 
\begin{itemize}
\item[Step 1:] Preliminary reductions. 

The existence of the N\'eron model implies by \cite{Holmes2014Neron-models-an} that the N\'eron model coincides canonically with the quotient of $\on{Pic}^{[0]}_{C/S}$ by the closure of the unit section $\bar{e}$, and the latter is flat (even \'etale) over $S$. 

 Given an integer $n \in \bb{N}$, we write $\Picn{n}_{C/S}$ for the subfunctor of $\Picz_{C/S}$ consisting of line bundles which on every fibre of $C/S$ have all partial degrees bounded in absolute value by $n$. Clearly $\Picn{n}_{C/S}$ is of finite type over $S$ for every $n$, and we have a composite map $\Picn{n}_{C/S}\ra \Picz_{C/S} \ra N$. If we can show that this composite is surjective for some $n$ then we are done. 

We may assume without loss of generality that $S$ is noetherian. Then we are done if we can make a constructible function $\underline{n}\colon S \ra \bb{N}$ such that for all $s \in S$, the composite $\Picn{\underline{n}(s)}_{C_s/s} \ra N_s$ is surjective. 

Fix a point $x \in S$. We are done if we can find a non-empty open subset $V \hra \overline{\{x\}}$ and an integer $n \in \bb{N}$ such that for all $s \in V$, the composite $\Picn{n}_{C_s/s} \ra N_s$ is surjective. 

\item[Step 2:] Graphs and test curves. 

After perhaps replacing $S$ by an \'etale cover, we may assume that $C/S$ is quasisplit. By \cite[lemma 6.3]{Holmes2014A-Neron-model-o} there exists an open subset $V \hra \overline{\{x\}}$ such that for all $s \in V$, the specialisation map $\on{sp}\colon\Gamma_s\ra\Gamma_x$ is an isomorphism on the underlying graphs. 

\textbf{Claim:} \emph{After shrinking $V$, there exists an integer $m$ such that for all $s \in V$ there exists a non-degenerate trait $f_s\colon T_s \ra S$ through $s$ such that for every edge $e$ of $\Gamma_s$, we have 
\begin{equation*}
\abs{\on{ord}_{T_s}f_s^*\on{label}(e)} \le m. 
\end{equation*}
}
Two things remain to complete the proof: we must prove the claim, and deduce the theorem from the claim. 

\item[Step 3:] Proving the claim. 

Let $f\colon T \ra S$ be a map from a trait to $S$ sending the closed point to $x$ and the generic point to a point in $U$ (this exists by a special case of \cite[7.1.9]{Grothendieck1961EGAII}). 

Write $\bar{T}$ for the schematic image of $T$ in $S$, and $\tilde{T}$ for the normalisation of $\bar{T}$ in $T$. The valuation $\on{ord}_T$ makes sense on elements of $\ca{O}_{\tilde{T}}(\tilde{T})$. After perhaps shrinking $\tilde{T}$ we can assume it is affine (write $R \coloneqq \ca{O}_{\tilde{T}}(\tilde{T})$), and can choose an element $r \in R$ such that $r$ maps to a uniformiser in $T$. 

For each edge $e$ of $\Gamma$, write $\on{label}(e) \in \ca{O}_{S,x}$ for its label. After shrinking $S$, we may assume that $S$ is affine and all the labels lie in $\ca{O}_S(S)$. The pullbacks of the labels to $\tilde{T}$ lie in $\ca{O}_{\tilde{T}}(\tilde{T})$, and so after perhaps shrinking $\tilde{T}$ we can assume that every label is equal (up to multiplication by a unit in $R$) to some power of the `uniformiser' $r$. We are then done by \ref{lem:trait_lemma}. 

\item[Step 4:] Deducing the theorem from the claim.  

We know the formation of $\Picz_{C/S}$ commutes with base change. As remarked above, $\bar{e}$ is flat over $S$ by our assumption that the N\'eron model exists. Because of this flatness, it follows that formation of the closure of the unit section also commutes with base-change. 

Now let us fix for each $s \in V$ a non-degenerate trait $f_s\colon T_s \ra S$ through $s$ as in the statement of the claim. Then the thicknesses of the singularities of $C_{T_s}/T_s$ are bounded in absolute value by $m$ as $s$ runs over $V$ (note that the graphs of the central fibres are the same for all $s \in V$). By \cite[\S2]{Edixhoven1998On-Neron-models} the multidegrees of points in the closure of the unit section can be described purely in terms of the combinatorics of the dual graph. Since there are only finitely many possibilities for this dual graph, we find an integer $n$ such that for all $s \in V$, the composite 
\begin{equation*}
\Picn{n}_{C_{T_s}/T_s}\ra \frac{\Picz_{C_{T_s}/T_s}}{\bar{e}} = N_s
\end{equation*}
is surjective. 
\end{itemize}
\end{proof}
%
%
%


\begin{lemma}\label{lem:trait_lemma}Let $R$ be an excellent noetherian domain and $x \in X \coloneqq \on{Spec}R$ be a point such that the localisation $R_x$ is a discrete valuation ring. Let $r \in R$ be a non-zero element. Then there exists an integer $m \in \bb{N}$ and an open subset $V \hra \overline{\{x\}}$ such that for all points $v \in V$, there exist a trait $f_v\colon T_v \ra X$ through $v$ with $\on{ord}_{T_v}f_v^*r \le m$. 
\end{lemma}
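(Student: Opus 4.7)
The plan is to exploit that $R_x$ being a DVR allows us, after shrinking $X$, to write $r = u\pi^n$ globally, with $u \in R^\times$, $\pi \in R$ cutting out $\overline{\{x\}}$ as a Cartier divisor, and $n = \on{ord}_{R_x}(r)$. I will then take $V$ to be the regular locus of $Z := \overline{\{x\}}$ and, for each $v \in V$, build $T_v$ by slicing $X$ with a hyperplane section at $v$ that is transverse to $\pi$; the uniform bound will then come out simply as $m = n$.

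The first steps are reductions. Since $R$ is excellent the regular locus $X^{\mathrm{reg}}$ is open, and it contains $x$ because $R_x$ is regular, so I would replace $X$ by $X^{\mathrm{reg}}$ to assume $X$ is regular. Then, choosing a uniformiser $\pi \in R_x$ and expressing $r = u\pi^n$ in $R_x$, a finite sequence of localisations at elements of $R \setminus \mathfrak{p}_x$ (each of which removes only a proper closed subset from $\overline{\{x\}}$, so is harmless) lets us arrange that $\pi \in R$, $u \in R^\times$, $r = u\pi^n$ holds in $R$, and $(\pi) = \mathfrak{p}_x$. Then $Z = V(\pi)$ is integral, and since $R/(\pi)$ is excellent, $Z^{\mathrm{reg}}$ is open in $Z$ and contains the generic point $x$ (whose local ring in $Z$ is a field); I would take $V := Z^{\mathrm{reg}}$.

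For each $v \in V$, both $\ca{O}_{X,v}$ and $\ca{O}_{Z,v} = \ca{O}_{X,v}/(\pi)$ are regular local rings, of dimensions $d$ and $d-1$ respectively. A short cotangent-space comparison then forces $\pi \notin \mathfrak{m}_v^2$, so $\pi$ is a regular parameter at $v$; completing to a regular system of parameters $(\pi, s_1, \ldots, s_{d-1})$ of $\ca{O}_{X,v}$, the quotient $A := \ca{O}_{X,v}/(s_1, \ldots, s_{d-1})$ is a regular one-dimensional local ring, hence a DVR with $\pi$ as a uniformiser. The canonical morphism $T_v := \on{Spec} A \to X$ is then a trait through $v$ with $\on{ord}_{T_v}(\pi) = 1$, whence $\on{ord}_{T_v}(r) = n$, so $m = n$ works. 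The main obstacle I anticipate is the bookkeeping in the preliminary reduction: one must check that all four conditions $\pi \in R$, $u \in R^\times$, $r = u\pi^n$ globally, and $(\pi) = \mathfrak{p}_x$ can be simultaneously arranged by inverting only elements outside $\mathfrak{p}_x$. This follows from noetherianity of $R$ together with the fact that a finitely generated ideal which becomes principal after localisation at a prime is already principal after a further localisation at a single element outside that prime; once this is in place, the cotangent-space step and the construction of the slice through $v$ are entirely formal.
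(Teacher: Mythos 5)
Your proof is correct and follows essentially the same route as the paper's: shrink to the regular locus, reduce to the case where $r$ (up to a unit and a power) generates $\mathfrak{p}_x$, pass to an open $V\subseteq\overline{\{x\}}$ on which $r\notin\mathfrak{m}_v^2$, and slice by a complementary regular system of parameters to produce the trait. The only difference is cosmetic: you pin down $V$ explicitly as the regular locus of $V(\pi)$ and justify $\pi\notin\mathfrak{m}_v^2$ by the cotangent-space dimension count, where the paper simply asserts that a small enough open works.
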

\begin{proof}
Since $X$ is regular at $x$ and the regular locus is open by excellence, we can assume after shrinking $X$ that $X$ itself is regular. Shrinking further we may assume that $r$ is a power of an element in $R$ which maps to a uniformiser in $R_x$, so it suffices to treat the case where $r$ itself maps to a uniformiser in $R_x$, and shrinking further we may assume $x = rR$. 

Choosing $V$ to be a small enough non-empty open of $\overline{\{x\}}$ we may assume that for all $v \in V$, we have $r \in \frak{m}_v$ and $r \notin \frak{m}_v^2$ (here we use that $x = rR$). Hence we can find a set of elements $a_1, \cdots, a_d \in \frak{m}_v$ such that the image of $r, a_1, \cdots, a_d$ in $\frak{m}_v/\frak{m}_v^2$ are a basis as a $\ca{O}_{X,v}/\frak{m}_v$-vector space. Then define $T_v$ to be the subscheme $V(a_1, \cdots, a_d)$ of $\on{Spec}\ca{O}_{X,v}$, and it is clear that $r$ pulls back to a uniformiser in the trait $T_v$. 
\end{proof}

\section{Consequences for torsion points}

\newcommand{\qq}{K}
Given a field $k$, by a \emph{variety over $k$} we mean a separated $k$-scheme of finite type. We fix a number field $\qq$ and an algebraic closure $\bar{\qq}$ of $\qq$. We write $\kappa(p)$ for the residue field of a point $p$. If $X/K$ is a variety and $d \in \bb{Z}_{\ge 1}$ then we write $X(\bar{K})^{\le d}$ for the set of $x \in X(\bar{K})$ with $[\kappa(x):K] \le d$. 

\begin{definition}
Let $S/K$ be a variety, and $A/S$ an abelian scheme. 
\begin{enumerate}
\item
We say \emph{the uniform boundedness conjecture holds for $A/S$} if for all $d \in \bb{Z}_{\ge 1}$ there exists $B \in \bb{Z}$ such that for all torsion points $p \in A(\bar{K})^{\le d}$, the point $p$ has order at most $B$. 
\item Given a section $\sigma \in A(S)$, we say \emph{the uniform boundedness conjecture holds for the pair $(A/S, \sigma)$} if for all $d \in \bb{Z}_{\ge 1}$ there exists $B \in \bb{Z}$ such that for all $p \in S(\bar{K})^{\le d}$, the point $\sigma(p)$ either has infinite order or has order at most $B$. 
\end{enumerate}
\end{definition}
If the uniform boundedness conjecture holds for $A/S$ then it clearly holds for the pair $(A/S, \sigma)$ for all $\sigma$. By considering the case of the universal PPAV, we deduce that if the uniform boundedness conjecture holds for all pairs $(A/S, \sigma)$ then the uniform boundedness conjecture itself (\ref{conj:UBC_reformulation_easy}) holds. 

\begin{lemma}\label{lem:bound_points_over_finite_field}
Fix an integer $g \ge 0$ and a prime power $q$. Then there exists an integer $b = b(g,q)$ such that for every connected commutative finite-type group scheme $G/\bb{F}_q$ of dimension $g$ we have $\#G(\bb{F}_q) \le b$. 
\end{lemma}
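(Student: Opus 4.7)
The plan is to reduce to the case where $G$ is smooth, then use the structure theory of commutative algebraic groups to bound the point counts of the pieces.

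First I would reduce to the smooth case. Since $\mathbb{F}_q$ is perfect, the reduced subscheme $G_{\text{red}}$ is a smooth closed subgroup scheme of $G$, still connected and of the same dimension $g$; and because $\on{Spec}\bb{F}_q$ is reduced we have $G(\bb{F}_q) = G_{\text{red}}(\bb{F}_q)$. So we may assume $G$ is smooth, connected, commutative of dimension $g$.

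Next I would apply Chevalley's structure theorem, giving a short exact sequence
\begin{equation*}
0 \to L \to G \to A \to 0
\end{equation*}
with $A$ an abelian variety and $L$ a connected affine (linear) algebraic group. Taking $\bb{F}_q$-points gives $\#G(\bb{F}_q) \le \#L(\bb{F}_q) \cdot \#A(\bb{F}_q)$. Since $L$ is commutative, smooth and connected over the perfect field $\bb{F}_q$, we have a further decomposition $L \cong T \times U$ with $T$ a torus and $U$ a smooth connected unipotent group.

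Finally I would bound each factor by an explicit function of $g$ and $q$. For the abelian part, Weil's bounds give $\#A(\bb{F}_q) \le (1+\sqrt{q})^{2\dim A} \le (1+\sqrt{q})^{2g}$. For the torus, writing $\#T(\bb{F}_q) = \prod_{i=1}^{\dim T}(q\alpha_i - 1)$ where the $\alpha_i$ are roots of unity (the eigenvalues of Frobenius on the character lattice of $T$), we obtain $\#T(\bb{F}_q) \le (q+1)^{\dim T} \le (q+1)^g$. For the unipotent part, over the perfect field $\bb{F}_q$ a smooth connected commutative unipotent group admits a composition series with quotients $\bb{G}_a$, so $U \cong \bb{A}^{\dim U}$ as a scheme and $\#U(\bb{F}_q) = q^{\dim U} \le q^g$. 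Multiplying yields $\#G(\bb{F}_q) \le (1+\sqrt{q})^{2g}(q+1)^g q^g$, which we can take as $b(g,q)$.

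There is no real obstacle here; the only subtlety is the appeal to perfectness of $\bb{F}_q$ at two separate places (to know $G_{\text{red}}$ is a smooth subgroup scheme, and to exclude wound unipotent groups in the decomposition of $L$). Both are standard and the rest is a bookkeeping exercise with the Weil bound and the character lattice computation.
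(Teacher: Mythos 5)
Your proposal is correct and follows essentially the same route as the paper: reduce to the smooth case via $G_{\mathrm{red}}$ over the perfect field $\bb{F}_q$, apply Chevalley's theorem to split off an abelian variety from a torus-times-unipotent part, and bound the points of each piece. The only difference is that you make the three bounds explicit (Weil bound, character-lattice computation, $U\cong\bb{A}^{\dim U}$), where the paper simply cites the existence of uniform bounds.
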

\begin{proof}Since $\bb{F}_q$ is perfect, the scheme $G^{red}$ is a subgroupscheme and contains all the field-valued points, so we may assume $G$ is reduced and hence smooth. Again using that $\bb{F}_q$ is perfect, we can apply Chevalley's theorem to write an extension
\begin{equation*}
1 \ra T \times U \ra G \ra A \ra 1
\end{equation*}
where $A$ is abelian, $T$ is a torus and $U$ is connected and unipotent. We know $U$ is isomorphic (as a scheme over $\bb{F}_q$) to $\bb{A}^n_{\bb{F}_q}$ for some $n \le g$ by \cite[Remark A.3]{Kambayashi1974Unipotent-algeb}, so we have uniform bounds on the sizes of $A(\bb{F}_q)$, $T(\bb{F}_q)$ and $U(\bb{F}_q)$, from which the result is immediate. 
\end{proof}

\begin{theorem}
Let $S$ be a proper scheme over $K$, let $A/S$ be a finite-type commutative group scheme with connected geometric fibres, and let $\sigma \in A(S)$ be a section. Then for every integer $d >0$ there exists a bound $B$ such that for all $s \in S(\bar{K})^{\le d}$, the point $\sigma_s$ is either of infinite order or is torsion of order at most $B$. 
\end{theorem}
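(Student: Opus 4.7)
The strategy is to spread $(S, A, \sigma)$ out over $\on{Spec}\ca{O}_K$ and reduce modulo two primes $v_1, v_2$ of distinct residue characteristics $p_1, p_2$. Reducing a torsion section of order $n$ modulo $v_i$ yields a torsion point of order $m_i \mid n$ in a connected finite-type commutative group scheme over a finite residue field, whose size is bounded by \ref{lem:bound_points_over_finite_field}; a formal-group argument then forces $n/m_i$ to be a power of $p_i$. Combining the two constraints bounds $n$ outright.

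Concretely, choose proper models $\tilde S \ra \mathscr{U}$, a finite-type commutative group scheme $\tilde A \ra \tilde S$, and a section $\tilde\sigma \in \tilde A(\tilde S)$ extending $(S, A, \sigma)$ over $\mathscr{U} = \on{Spec}\ca{O}_K[1/N]$. After shrinking $\mathscr{U}$, arrange that $\tilde A/\tilde S$ is smooth along its identity section and has geometrically connected fibres (the generic fibres are smooth by Cartier's theorem in characteristic zero, and constructibility lets us spread this along $\mathscr{U}$). Let $g$ bound the fibre dimensions, pick two closed points $v_1, v_2 \in \mathscr{U}$ of distinct residue characteristics $p_1, p_2$, and set $q_i := \#\kappa(v_i)$ and $B_i := \max_{1 \le j \le d} b(g, q_i^j)$ via \ref{lem:bound_points_over_finite_field}.

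Given $s \in S(\bar K)^{\le d}$ with $\sigma_s$ of finite order $n$, set $L = \kappa(s)$ and fix a place $w_i$ of $L$ above $v_i$. The valuative criterion for $\tilde S/\mathscr{U}$ extends $s$ to $\on{Spec}\ca{O}_{L, w_i} \ra \tilde S$, with reduction $\bar s_i \in \tilde S_{v_i}(\kappa(w_i))$ satisfying $\#\kappa(w_i) \le q_i^d$. Since $[n]\tilde\sigma$ agrees with the identity section $e$ generically and $\tilde A/\tilde S$ is separated, the equality holds over the whole trait; in particular $\bar\sigma_i \in \tilde A_{\bar s_i}(\kappa(w_i))$ has order $m_i \mid n$, and \ref{lem:bound_points_over_finite_field} applied to the connected group $\tilde A_{\bar s_i}$ yields $m_i \le B_i$. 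Meanwhile $\tau_i := [m_i]\tilde\sigma$ is a section over the trait reducing to $e$ at $w_i$; by smoothness of $\tilde A$ along $e$, in \'etale local coordinates $\tau_i$ corresponds to a tuple in $\mathfrak{m}_{w_i}^g$ satisfying the induced formal group law, i.e.\ to a point of the commutative formal group $\hat{\tilde A}_e$ over $\ca{O}_{L, w_i}$. Since torsion in a formal group over a $p_i$-adic DVR is $p_i$-primary (via the formal logarithm), $n/m_i$ is a power of $p_i$. Writing $n = p_1^{e_1} p_2^{e_2} k$ with $\gcd(k, p_1 p_2) = 1$, the constraints on $m_1$ and $m_2$ force $p_2^{e_2} k \mid m_1 \le B_1$ and $p_1^{e_1} k \mid m_2 \le B_2$, whence multiplying gives $n k \le B_1 B_2$ and so $n \le B_1 B_2$.

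The principal obstacle I anticipate is the spreading-out step: arranging $\tilde A/\tilde S$ simultaneously smooth along $e$ and with geometrically connected fibres throughout $\mathscr{U}$ requires invoking Cartier's theorem plus flatness at $e$ (perhaps via miracle flatness, since the fibres are smooth and $\tilde S$ can be taken regular) together with constructibility of fibrewise connectedness for finite-type morphisms. A secondary technicality is making the formal-group torsion statement rigorous without an explicit trivialisation of $\hat{\tilde A}_e$ over $\ca{O}_{L, w_i}$.
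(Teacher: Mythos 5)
Your proposal is correct and follows essentially the same route as the paper: spread out over $\mathcal{O}_K[1/N]$, bound the orders of the reductions at two places of distinct residue characteristics via \ref{lem:bound_points_over_finite_field}, and use that prime-to-$p$ torsion injects under reduction to control the full order. The only cosmetic difference is that you obtain this injectivity from the formal group along the identity section, whereas the paper observes that the subgroup scheme generated by a prime-to-$p$ torsion point is \'etale over the $p$-adic local base; both are standard and interchangeable here.
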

\begin{proof}
To simplify the notation we treat the case $K=\bb{Q}$ and $d=1$; the general case is very similar. We begin by observing that we can `spread out' the proper scheme $S$, the group scheme $A$ and the section $\sigma$ over $\bb{Z}[1/N]$ for some sufficiently divisible integer $N$ - we use the same letters for the `spread out' objects. Let $p$ be a prime number not dividing $N$, and let $b = b(\on{dim}A/S, p)$ be the bound from \ref{lem:bound_points_over_finite_field}. Suppose we are given $s \in S(K)$ (with unique extension $\bar{s}\in S(\bb{Z}[1/N])$) such that $\sigma_s$ is torsion. For some $n$ the point $\tau \coloneqq p^n\sigma_s$ is torsion of order prime to $p$. Then the subgroupscheme of $A_{\bar{s}}$ generated by $\tau$ is \'etale over $\bb{Z}_p$, and so the order of the torsion point $\tau$ is bounded above by $b$ (independent of $s\in S(K)$). In this way we control the prime-to-$p$ part of the order, and by considering another prime $l$ we can control the whole order.  
\end{proof}

In the case of abelian varieties, we immediately obtain
\begin{corollary}\label{thm:UBC_for_good_compactifiation}
Let $S$ be a proper scheme over $K$, $U \sub S$ dense open, $A/U$ an abelian scheme, and $\sigma \in A(U)$ a section. Suppose there exists a finite-type commutative group scheme with connected fibres $\bar{A}/S$ extending $A$, such that $\sigma$ extends to $\bar{A}(S)$. Then the uniform boundedness conjecture holds for $(A/S, \sigma)$. 
\end{corollary}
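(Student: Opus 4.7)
The plan is to reduce the corollary to the preceding theorem in a single step. The hypotheses of that theorem are satisfied by the triple $(S, \bar A, \bar\sigma)$: $S$ is proper over $K$ by assumption, $\bar A/S$ is a finite-type commutative group scheme with connected geometric fibres, and the extension $\bar\sigma \in \bar A(S)$ is given. Applying the theorem produces, for each $d \ge 1$, a bound $B = B(d)$ such that for every $s \in S(\bar K)^{\le d}$ the point $\bar\sigma(s)$ is either of infinite order or is torsion of order at most $B$.

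Over the dense open $U$, the open immersion $A \hookrightarrow \bar A$ identifies $A_u$ with $\bar A_u$ for every $u \in U$, and $\bar\sigma|_U = \sigma$. Thus the value $\sigma(u) \in A_u$ has the same order as $\bar\sigma(u) \in \bar A_u$, and the bound $B$ produced above controls its torsion order whenever the order is finite. Specialising the dichotomy from $S(\bar K)^{\le d}$ to $U(\bar K)^{\le d}$ therefore yields the uniform boundedness conjecture for the pair.

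There is no real obstacle to this reduction: the entire substance has been absorbed into the theorem, whose own proof proceeds by spreading the data $(S, \bar A, \bar\sigma)$ out over $\bb{Z}[1/N]$ and invoking \ref{lem:bound_points_over_finite_field} at two distinct good primes $p, \ell$ in order to bound the prime-to-$p$ and prime-to-$\ell$ parts of the torsion order separately. The only minor point to notice is that the statement writes $(A/S,\sigma)$ even though $\sigma \in A(U)$; this is harmless, since the definition of uniform boundedness is a condition on values $\sigma(u)$ at closed points $u$ of bounded degree, and applies verbatim to $(A/U,\sigma)$, which is what is actually proved.
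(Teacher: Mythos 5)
Your reduction is correct and is exactly what the paper intends: the corollary is stated as an immediate consequence of the preceding theorem, applied to $(S,\bar{A},\bar{\sigma})$ and then restricted to $U(\bar{K})^{\le d}\subseteq S(\bar{K})^{\le d}$. No further comment is needed.
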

By considering the diagonal section of tautological family of abelian varieties $A \times_S A$ over $A$, we deduce
\begin{corollary}\label{intro_thm:UBC_for_proper}
Suppose that $A/S$ is an abelian scheme with $S$ a \emph{proper} scheme over $K$. Then the uniform boundedness conjecture holds for $A/S$. 
\end{corollary}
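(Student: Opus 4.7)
The plan is to apply Corollary \ref{thm:UBC_for_good_compactifiation} to the tautological setup obtained by pulling $A$ back to itself. Since $A/S$ is proper (it is an abelian scheme) and $S$ is proper over $K$, the total space $A$ is itself proper over $K$, so $A$ is a legitimate base for the previous corollary.

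First I would consider the first projection $\pi_1 \colon A \times_S A \to A$, which is an abelian scheme over $A$, together with its diagonal section $\Delta \colon A \to A \times_S A$. Applying Corollary \ref{thm:UBC_for_good_compactifiation} with base $A$, dense open $U = A$ itself, abelian scheme $A \times_S A$ over $A$, compactification $\bar{A} = A \times_S A$ (which is already a finite-type commutative group scheme over $A$ with connected fibres), and section $\sigma = \Delta$ immediately yields UBC for the pair $(A \times_S A / A, \Delta)$.

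It remains to translate this into UBC for $A/S$. Let $\pi \colon A \to S$ denote the structure map. A torsion point $p \in A(\bar{K})^{\le d}$ is a $\bar{K}$-point of the total space $A$ (viewed as a $K$-scheme) whose residue field has degree at most $d$ over $K$; it lies in the fibre $A_{\pi(p)}$ over the point $\pi(p) \in S(\bar{K})^{\le d}$. Under the canonical identification $(A \times_S A)_p \cong A_{\pi(p)}$ via $\pi_1$, the diagonal $\Delta$ sends $p$ to $p$ itself. Hence the order of $\Delta(p)$ in the fibre of $A \times_S A$ over $p$ equals the order of $p$ in $A_{\pi(p)}$, and the uniform bound produced by the previous step is exactly the bound required by UBC for $A/S$.

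I expect no real obstacle: Corollary \ref{thm:UBC_for_good_compactifiation} does all the substantive work, and the deduction is purely formal once one spots the trick of taking the base to be $A$ itself and the section to be $\Delta$. The minor point to check is that $A$ is proper over $K$, so that the hypotheses of the previous corollary are satisfied; this is the reason the hypothesis on $S$ cannot be weakened in an obvious way.
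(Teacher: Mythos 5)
Your proposal is correct and is exactly the paper's argument: the paper deduces this corollary in one line ``by considering the diagonal section of the tautological family $A\times_S A$ over $A$'' and applying \ref{thm:UBC_for_good_compactifiation} to the proper base $A$. The translation step you spell out (identifying the order of $\Delta(p)$ in the fibre over $p$ with the order of $p$ in $A_{\pi(p)}$) is the right, purely formal, bookkeeping.
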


Now \ref{intro_thm:UBC_for_proper} is rather trivial, and it is not a-priori clear how to construct interesting examples for \ref{thm:UBC_for_good_compactifiation}. In the case of jacobians of nodal curves the situation becomes much better; we understand exactly when N\'eron models exist by \cite{Holmes2014Neron-models-an}, and by \ref{thm:fin_type} we know that they are always of finite type, so any section of the jacobian family  extends to the identity component of the N\'eron model after taking some finite multiple (which is harmless for the arguments). We obtain
\begin{corollary}\label{final_cor}
Let $S$ be a proper scheme over $K$ and $C/S$ a regular family of nodal curves, smooth over some dense open $U \sub S$. Let $\sigma \in J(U)$ be a section of the jacobian family. If $J/U$ admits a N\'eron model over $S$ then the uniform boundedness conjecture holds for $(J/U, \sigma)$. 
\end{corollary}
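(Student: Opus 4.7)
By \ref{thm:fin_type}, the N\'eron model $N/S$ is of finite type; in particular, since $S/K$ is proper, $N$ is a finite-type $K$-scheme. The defining universal property of the N\'eron model, applied to the smooth $S$-scheme $\on{id}_S$, extends $\sigma \in J(U)$ uniquely to a section $\bar\sigma \in N(S)$. The goal is to reduce to \ref{thm:UBC_for_good_compactifiation} applied to the identity component of $N$, once we have replaced $\sigma$ by a suitable multiple.

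Let $N^0 \hra N$ be the open subgroup consisting of the fibrewise identity components; this is a smooth finite-type commutative $S$-group scheme with geometrically connected fibres, and $N^0|_U = J$ since the fibres of $J/U$ are already connected. The quotient $\Phi \coloneqq N/N^0$ is an \'etale quasi-finite separated group scheme over the noetherian base $S$, and its fibres (the component groups of the N\'eron model) have uniformly bounded cardinality since $\Phi\to S$ is of finite type with finite fibres. The composite $S \xrightarrow{\bar\sigma} N \to \Phi$ is a section, so for some $n \ge 1$ the section $n\bar\sigma$ agrees with the zero section of $\Phi$ on every geometric fibre; by separatedness of $\Phi/S$ (the equalizer of two sections is closed and contains every point), this forces $n\bar\sigma \in N^0(S)$.

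Now apply \ref{thm:UBC_for_good_compactifiation} with $A = J/U$, with $\bar A = N^0/S$ as the finite-type commutative group scheme with connected fibres extending $A$, and with section $n\sigma \in J(U)$, which extends to $n\bar\sigma \in N^0(S)$. We conclude that the uniform boundedness conjecture holds for $(J/U, n\sigma)$. Finally, this implies the conclusion for $(J/U, \sigma)$ itself: $\sigma(p)$ is torsion if and only if $n\sigma(p)$ is torsion, and if the latter has order at most $B$ then the former has order at most $nB$.

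The main technical point is the passage from $\bar\sigma$ to a multiple lying in $N^0$; this is where finite-typeness of $N$ (from \ref{thm:fin_type}) is essential, since it is what forces the component group $\Phi/S$ to have fibres of bounded order. Once this step is in place, the result is a direct application of \ref{thm:UBC_for_good_compactifiation}.
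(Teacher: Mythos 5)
Your proof is correct and follows essentially the same route as the paper, which simply remarks that by \ref{thm:fin_type} the N\'eron model is of finite type, so a finite multiple of the extended section lands in the identity component, and then invokes \ref{thm:UBC_for_good_compactifiation}. (One small simplification: you do not need separatedness of $\Phi$ --- once $n\bar\sigma$ lands fibrewise in the identity components, it factors through the \emph{open} subscheme $N^0$ automatically.)
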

\subsection{Examples}\label{examples}

We finish by giving some examples where this result can be applied. Recall from \cite{Holmes2014Neron-models-an} that the jacobian $J$ admits a N\'eron model if and only if the curve $C/S$ is \emph{aligned}; in other words that for all geometric points $s$ of $S$, the \emph{labelled dual graph} $\Gamma_s$ described in \ref{sec:NM_finite_type} has the property that

\begin{quote}
for every circuit $\gamma$ in $\Gamma_s$, and for every pair of edges $e_1$, $e_2$ appearing in $\gamma$, the labels of $e_1$ and $e_2$ satisfy a multiplicative relation of the form 
\begin{equation*}
\on{label}(e_1)^{n_1} = \on{label}(e_2)^{n_2}
\end{equation*}
for some positive integers $n_1$ and $n_2$. 
\end{quote}

Thus we see that $J$ admits a N\'eron model over $S$ (and so \ref{final_cor} applies) if any of the following hold (note that in each case $S$ must be proper in order to apply \ref{final_cor}). 
\begin{enumerate}
\item $S$ has dimension 1, so all labels are powers of a uniformiser in the base (this recovers a weaker version of a theorem of Silverman \cite{Silverman1983Heights-and-the} and Tate \cite{Tate1983Variation-of-th} on heights in families of varieties); 
\item $C/S$ is of compact type (all dual graphs are trees); then the N\'eron model is an abelian scheme (cf. \ref{intro_thm:UBC_for_proper}); 
\item \label{item:treelike}All dual graphs of $C/S$ are \emph{treelike} (i.e. a tree with some loops added). The N\'eron model need not be proper, but its component groups will be trivial; 
\item The complement of $U$ in $S$ (with reduced scheme structure) is smooth over $K$, cf. \ref{cor_intro:smooth_boundary}. Here we do not need to assume that $C$ is regular, since in this case we can resolve singularities without disturbing alignment. 
\end{enumerate}
In example \oref{item:treelike} the assumption that $C$ be regular is crucial, as we can illustrate with elliptic curves. Let $\mathcal{M}_{1,1}/K$ be the moduli stack\footnote{All our results work just as well when the base is an algebraic stack. } of stable 1-pointed curves of genus 1, and let $\mathcal{M}_{1,2}$ be the universal stable `pointed elliptic curve'. Now $\mathcal{M}_{1,1}$ has treelike-fibers so by example \oref{item:treelike} we can apply \ref{final_cor} to the regular curve $\mathcal{M}_{1,2}$ over $\mathcal{M}_{1,1}$ (we could alternatively use that $\mathcal{M}_{1,1}$ has dimension 1). However, if we want to re-prove the uniform boundedness conjecture for elliptic curves we would need to apply \ref{final_cor} to the universal stable curve $\mathcal{M}_{1,3}$ over $\mathcal{M}_{1,2}$. Note that $\mathcal{M}_{1,3}$ can be obtained by blowing up $X \coloneqq \mathcal{M}_{1,2} \times_{\mathcal{M}_{1,1}}\mathcal{M}_{1,2}$. Now $X /\mathcal{M}_{1,2}$ \emph{does} have treelike fibres, but when we resolve the singularities this breaks down, and indeed $\mathcal{M}_{1,3}$ over $\mathcal{M}_{1,2}$ does \emph{not} have treelike fibres and its jacobian does \emph{not} admit a N\'eron model. Similar considerations can be used to see that our results cannot be used to recover those of \cite{Cadoret2012Uniform-bounded}.


\bibliographystyle{alpha} 
\bibliography{../../../prebib.bib}

\end{document}